\newcommand*{\mailto}[1]{\href{mailto:#1}{#1}}
\numberwithin{equation}{section}
\newtheorem{theorem}[equation]{Theorem}
\newtheorem{proposition}[equation]{Proposition}
\newtheorem{corollary}[equation]{Corollary}
\theoremstyle{definition}
\theoremstyle{remark}
\newcommand*{\Z}{\mathbb{Z}}
\def\<{\left\langle}
\def\>{\right\rangle}
\newcommand*{\inv}{^{-1}}
\newcommand{\aveN}{\frac{1}{N}\sum_{n=1}^N}
\newcommand{\aveFn}{\frac{1}{|\Phi_N|}\sum_{n\in \Phi_N}}
\newcommand*{\ol}[1]{\overline{#1}}
\newcommand*{\Gb}[1][]{G_{\bullet #1}}
\newcommand*{\poly}[1][\Gb]{P(\Z,#1)}
\DeclareMathOperator*{\Climinf}{C-lim\,inf}
\begin{document}
\subjclass[2010]{37A30, 28D05}
\title{A uniform nilsequence Wiener--Wintner theorem for bilinear ergodic averages}
\author{Pavel Zorin-Kranich}
\address{Universität Bonn\\
  Mathematisches Institut\\
  Endenicher Allee 60\\
  53115 Bonn\\
  Germany
}
\email{\mailto{pzorin@uni-bonn.de}}
\urladdr{\url{http://www.math.uni-bonn.de/people/pzorin/}}
\keywords{Wiener--Wintner theorem, nilsequence, uniform convergence}
\begin{abstract}
We show that a $k$-linear pointwise ergodic theorem on an ergodic measure-preserving system implies a uniform $k$-linear nilsequence Wiener--Wintner theorem on that system.
The assumption is known to hold for arbitrary systems and $k=2$ (due to Bourgain) and for distal systems and arbitrary $k$ (due to Huang, Shao, and Ye).
\end{abstract}
\maketitle

\section{Introduction}\label{sec:intro}
Let $(X,\mu,T)$ be an ergodic measure-preserving system and $\Phi$ a F\o{}lner sequence in $\Z$.
Call a sequence $(a_{n})_{n}$ a \emph{good weight for the $k$-linear pointwise ergodic theorem on $X$ along $\Phi$} if for some distinct non-zero integers $b_{1},\dots,b_{k}$ and any bounded functions $f_{1},\dots,f_{k}\in L^{\infty}(X)$ the limit
\[
\lim_{N\to\infty} \aveFn a_{n} \prod_{i=1}^{k} f_{i}(T^{b_{i}n}x)
\]
exists pointwise almost everywhere.
With this terminology, the nilsequence Wiener--Wintner theorem \cite[Theorem 2.22]{MR2544760} tells that nilsequences are good weights for the $1$-linear pointwise ergodic theorem on any measure-preserving system along the standard F\o{}lner sequence $\Phi_{N}=\{1,\dots,N\}$.
The $1$-linear pointwise ergodic theorem, that is, the fact that $a_{n}\equiv 1$ is a good weight, has been used as a black box in its proof.
Recently, Assani, Duncan, and Moore \cite{arXiv:1402.7094} showed that the sequences $a_{n}=e^{ip(n)}$, $p$ polynomial, are good weights for the $2$-linear pointwise ergodic theorem, similarly using the $a_{n}\equiv 1$ case as a black box.
Moreover, their result is uniform in the same way as the $1$-linear nilsequence Wiener--Wintner theorem in \cite{arxiv:1208.3977}.

In this note we prove the natural joint generalization of these results.
We fix a F\o{}lner sequence $\Phi$ and say that the system $(X,\mu,T)$ has property $P_{k}$ if $a_{n}\equiv 1$ is a good weight for the $k$-linear pointwise ergodic theorem on $X$ along $\Phi$.
It is a long-standing conjecture that every measure-preserving system satisfies $P_{k}$ for every $k$ along the standard F\o{}lner sequence $\Phi_{N}=\{1,\dots,N\}$, and we have nothing to add on this issue.
For $k=2$ this conjecture has been proved by Bourgain \cite{MR1037434} (see also \cite{MR2417419} and \cite{arxiv:1504.07134}).
Some partial results for $k>2$ can be found in \cite{arxiv:1406.5930} and \cite{MR1613556}.

Our main result is the following uniformity seminorm estimate.
We refer to the prequel \cite{arxiv:1208.3977} for definitions of various concepts related to nilmanifolds $G/\Gamma$ and to \cite{arxiv:1506.05748} for the definition of the modified uniformity seminorms $U^{k+l}(T,c)$.
\begin{theorem}
Suppose that the ergodic measure-preserving system $(X,\mu,T)$ has property $P_{k}$ with parameters $b_{1},\dots,b_{k}$ along the F\o{}lner sequence $\Phi$.
Then
\label{thm:main}
\begin{multline}
\label{eq:ave-uniform}
\int \limsup_{N\to\infty}
\sup_{G/\Gamma : \Gb \text{ has length } l} C_{G/\Gamma}^{-1}
\sup_{\substack{g\in\poly\\ F\in W^{r,2^{l}}(G/\Gamma)}}
\Big| \|F\|_{W^{r,2^{l}}(G/\Gamma)}\inv
\aveFn \prod_{i=1}^{k} f_{i}(T^{b_{i} n}x) F(g(n)\Gamma) \Big|^{2^{l+1}}\\
\lesssim_{b_{1},\dots,b_{k},l}
\min_{i} \|f_{i}\|_{U^{k+l}(T,c)}^{2^{l+1}},
\end{multline}
where $r = \sum_{m=1}^{l}(d_{m}-d_{m+1})\binom{l}{m-1}$ with $d_{i}=\dim G_{i}$, $c=c(b_{1},\dots,b_{k})$, and the positive constants $C_{G/\Gamma}$ depend only on the nilmanifold $G/\Gamma$, that is, the filtration $\Gb$, the lattice $\Gamma$, and the Mal'cev basis used to define the Sobolev spaces $W^{r,2^{l}}(G/\Gamma)$.
\end{theorem}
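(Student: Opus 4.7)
The plan is to argue by induction on $l$, the length of the filtration $\Gb$.

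For the base case $l=0$ the nilmanifold $G/\Gamma$ is trivial, so $F$ is constant, $r=0$, and the claim reduces to
$$\int \limsup_{N\to\infty}\Big|\aveFn \prod_{i=1}^{k} f_i(T^{b_i n}x)\Big|^{2} \lesssim_{b_1,\dots,b_k} \min_i \|f_i\|_{U^k(T,c)}^{2}.$$
The a.e.\ existence of the inner limit is precisely property $P_{k}$; the control of its $L^{2}$--norm by the modified seminorm $U^{k}(T,c)$ of any one $f_i$ is built into the definition of the modified seminorms in \cite{arxiv:1506.05748}, via iterated van der Corput / Cauchy--Schwarz over the shift parameters $b_1,\dots,b_k$.

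For the inductive step, fix $l\ge 1$ and assume the result for length $l-1$. Set $u_n(x):=\prod_i f_i(T^{b_i n}x)\,F(g(n)\Gamma)$ and apply a van der Corput inequality on $\Phi$ in the variable $n$, with auxiliary average over $h\in\Phi_H$, to bound $|\aveFn u_n(x)|^{2}$ by
$$\frac{1}{|\Phi_H|}\sum_{h\in\Phi_H}\aveFn \prod_{i=1}^{k} f_i^{(h)}(T^{b_i n}x)\cdot F_h\bigl(g_h(n)\tilde\Gamma\bigr)$$
up to errors vanishing as $H,N\to\infty$, where $f_i^{(h)}(y):=f_i(T^{b_i h}y)\overline{f_i(y)}$, $g_h(n):=(g(n+h),g(n))$, and $F_h:=F\otimes\overline F$, viewed as a function on a sub-quotient nilmanifold $\tilde G/\tilde\Gamma$ of $G\times G/\Gamma\times\Gamma$. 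The key point is that $g_h$ takes values in the Host--Kra subgroup $\tilde G=\{(g_1,g_2)\in G\times G : g_1 g_2\inv\in G_2\}$, whose natural filtration has effective length $l-1$ after factoring out the abelian $\tilde G/\tilde G_2$--character produced by the top-degree part of $g$.

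Apply the induction hypothesis to the inner $k$-linear average for each fixed $h$. The resulting Sobolev factors telescope over the $l$ iterations into the prescribed weight $\|F\|_{W^{r,2^l}(G/\Gamma)}^{2^{l+1}}$, the combinatorial exponent $r=\sum_m (d_m-d_{m+1})\binom{l}{m-1}$ encoding exactly the cost of repeatedly tensoring with $\overline F$ and restricting to a sub-nilmanifold. The remaining factor is $\min_i\|f_i^{(h)}\|_{U^{k+l-1}(T,c)}^{2^l}$, and averaging over $h\in\Phi_H$ the recursive definition of the modified uniformity seminorm yields
$$\Climinf_{H}\frac{1}{|\Phi_H|}\sum_{h\in\Phi_H}\|f_i^{(h)}\|_{U^{k+l-1}(T,c)}^{2^{l}}\le \|f_i\|_{U^{k+l}(T,c)}^{2^{l+1}},$$
closing the induction.

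The main obstacle is the uniform bookkeeping: one must verify that the Mal'cev basis on $G/\Gamma$ descends to $\tilde G/\tilde\Gamma$ with a constant controlled by $C_{G/\Gamma}$, and that the Sobolev norm of $F\otimes\overline F$ on this sub-nilmanifold has the expected quantitative dependence on $\|F\|_{W^{r,2^l}(G/\Gamma)}$ with precisely the binomial weighting that appears in $r$. This is a quantitative refinement of the Host--Kra / Ziegler reduction, pursued in the style of the prequel \cite{arxiv:1208.3977} but with the $k$-linear factors $f_i$ carried along as spectators through each van der Corput stage.
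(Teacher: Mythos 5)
Your overall architecture --- induction on the filtration length, a van der Corput step producing the cube construction $F\otimes\ol F$ on a Host--Kra type subgroup, and the recursive definition of the modified seminorms to close the induction --- is the same as the paper's. But there is a genuine gap at the heart of your inductive step: you apply van der Corput directly to a general $F\in W^{r,2^{l}}(G/\Gamma)$ and assert that $F\otimes\ol F$ lives on a nilmanifold of filtration length $l-1$. That is false in general. The Host--Kra group $\tilde G=\{(g_1,g_2):g_1g_2\inv\in G_2\}$ still contains the diagonal copy of the central subgroup $G_l$ in the $l$-th term of its natural filtration, so its length is still $l$; the length drops only after quotienting by this diagonal, and $F\otimes\ol F$ descends to that quotient only when $F$ transforms under $G_l$ by a character, since then $F(ug)\ol{F(ug')}=\chi(u)\ol{\chi(u)}\,F(g)\ol{F(g')}=F(g)\ol{F(g')}$ for $u\in G_l$. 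Your parenthetical about ``factoring out the abelian $\tilde G/\tilde G_2$-character produced by the top-degree part of $g$'' misidentifies the mechanism: the relevant subgroup is the central $G_l$, and the character must come from $F$, not from $g$. The paper therefore inserts, \emph{before} van der Corput, a reduction to vertical characters via the vertical Fourier series $F=\sum_\chi F_\chi$, paying $d_l=\dim G_l$ derivatives of Sobolev regularity to make the series summable (Lemma 3.7 of the prequel). This step is not optional bookkeeping to be ``verified'' afterwards: it is both what makes the induction close and the source of the exponent $r=\sum_{m}(d_m-d_{m+1})\binom{l}{m-1}$, which records the loss of $d_l$ derivatives at each stage of the recursion.

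Two smaller points. First, the induction hypothesis is an integrated statement, so after van der Corput you must integrate over $X$ and apply Fatou's lemma to pull the averaged limit over the shift parameter outside the integral before invoking it. Second, your final inequality $\Climinf_{h}\|f_i^{(h)}\|_{U^{k+l-1}(T,c)}^{2^{l}}\le\|f_i\|_{U^{k+l}(T,c)}^{2^{l+1}}$ is not literally the recursive definition of the seminorm, which carries the exponent $2^{k+l-1}$ inside the average; you need an intermediate application of H\"older (Jensen) to raise the exponent from $2^{l}$ to $2^{k+l-1}$, after which the powers match as you claim.
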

The case $k=2$, $l=1$ has been proved by Assani, Duncan, and Moore \cite{arXiv:1402.7094}, and the case of commutative $G$ by Assani and Moore \cite{arXiv:1408.3064,arXiv:1409.0463}.
An immediate consequence of Theorem~\ref{thm:main} is a nilsequence Wiener--Wintner theorem.
Before formulating it let us recall the following fact.
\begin{proposition}
\label{prop:good-weight-L2}
Let $(c_{n})_{n\in\Z}$ be a bounded complex-valued sequence.
Then the following statements are equivalent.
\begin{enumerate}
\item\label{gw:nil} For every nilsequence $(a_{n})$ the averages
\[
\aveN a_{n}c_{n}
\]
converge as $N\to\infty$.
\item\label{gw:poly} The sequence $(c_{n})$ is a good weight for polynomial multiple ergodic averages along $\{1,\dots,N\}$, i.e., for every measure-preserving system $(Y,\nu, S)$, integer polynomials $p_{1},\dots,p_{k}$, and functions $f_1,\ldots,f_k\in L^\infty(Y,\nu)$ the averages
\[
\aveN \phi(T^nx) S^{p_{1}(n)} f_1\cdots S^{p_{k}(n)} f_k
\]
converge in $L^2(Y,\nu)$ as $N\to \infty$.
\item\label{gw:lin} The sequence $(c_{n})$ is a good weight for linear multiple ergodic averages along $\{1,\dots,N\}$, i.e., \eqref{gw:poly} holds with $p_{i}(n)=b_{i}n$, $b_{i}$ arbitrary.
\end{enumerate}
\end{proposition}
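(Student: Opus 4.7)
The plan is to prove the cycle $(2) \Rightarrow (3) \Rightarrow (1) \Rightarrow (2)$. The implication $(2) \Rightarrow (3)$ is immediate on taking $p_i(n)=b_i n$.

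For $(3) \Rightarrow (1)$, by uniform approximation and linearity it suffices to treat a basic nilsequence $a_n = F(g^n\Gamma)$ with $F \in C(G/\Gamma)$. By Leibman's orbit-closure theorem, $Y := \overline{\{g^n \Gamma : n \in \Z\}}$ is a closed sub-nilmanifold of $G/\Gamma$ on which $T_g$ acts minimally, hence uniquely ergodically with respect to its Haar measure $m_Y$. Applying (3) to $(Y, m_Y, T_g)$ with $k=1$, $b_1=1$, and $f_1 = F|_Y$ yields $L^2(Y)$-convergence of
\[
\Phi_N(y) = \aveN c_n F(T_g^n y).
\]
The contribution of the mean $\int_Y F\,dm_Y$ is handled by (3) applied to the trivial system; one may therefore assume $\int F\,dm_Y = 0$ and proceed by induction on the step $l$ of $\Gb$. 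In the base case $l=1$ the nilsystem is an isometric torus rotation, so the family $(\Phi_N)_N$ is uniformly equicontinuous in $y$, and its $L^2$-limit coincides with its pointwise limit at every $y$, in particular at $\Gamma$. For $l \geq 2$, a van der Corput estimate reduces $|\Phi_N(\Gamma)|^2$ to $h$-averages of Cesàro means of $n \mapsto c_n \overline{c_{n+h}} a_n \overline{a_{n+h}}$; the differenced sequence $a_n \overline{a_{n+h}}$ is a basic nilsequence of step at most $l-1$ on a product nilmanifold, and the inductive hypothesis supplies its pointwise convergence.

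For $(1) \Rightarrow (2)$, invoke the Host--Kra--Ziegler structure theorem: the unweighted polynomial multiple ergodic averages in question converge in $L^2(Y,\nu)$ and are controlled by the projections of the $f_i$ onto a pro-nilfactor $\HKZ$ of $(Y,\nu,S)$. Approximating each $f_i$ on $\HKZ$ by a continuous function on a finite-step nilmanifold, Leibman's polynomial orbit theorem shows that for $\nu$-almost every $y$ the map $n \mapsto \prod_{i=1}^k f_i(S^{p_i(n)} y)$ is, via the diagonal embedding into a product nilmanifold, a basic nilsequence in $n$, uniformly bounded in $y$ and $n$. Hypothesis (1) then gives pointwise a.e.\ convergence of $\aveN c_n \prod_i f_i(S^{p_i(n)} y)$, and bounded convergence upgrades this to $L^2$-convergence.

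The main obstacle will be the $L^2$-to-pointwise upgrade in $(3) \Rightarrow (1)$: $L^2$-convergence on $Y$ never by itself evaluates at an individual point on a higher-step nilsystem, and the induction on step via van der Corput crucially exploits the fact that commutators strictly decrease the degree in the filtration $\Gb$, a feature specific to nilmanifolds. Once this step is in hand, the remaining implications are routine applications of Host--Kra--Ziegler theory and dominated convergence.
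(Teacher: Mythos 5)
The paper's own proof of this proposition is three citations: (1)~$\Rightarrow$~(2) is quoted as \cite[Theorem 1.3]{MR2465660}, (2)~$\Rightarrow$~(3) is trivial, and (3)~$\Rightarrow$~(1) is quoted as \cite[Proposition 2.4]{arxiv:1407.0631}. Your implications (2)~$\Rightarrow$~(3) and (1)~$\Rightarrow$~(2) follow the standard route behind those citations and are fine in outline, with one caveat: the reduction of the \emph{weighted} polynomial averages to the pro-nilfactor $\HKZ$ is not given by the unweighted Host--Kra/Leibman theorem alone; you need the weighted seminorm estimate $\limsup_N\big\|\aveN c_n\prod_iS^{p_i(n)}f_i\big\|_{L^2}\lesssim\|c\|_{\infty}\min_i\|f_i\|_{U^s}$, which is its own van der Corput argument. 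You should also note that $n\mapsto\prod_if_i(h^{p_i(n)}z)$ is a \emph{polynomial} nilsequence and invoke the theorem that polynomial nilsequences are (higher-step) linear nilsequences before applying~(1).

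The genuine gap is in (3)~$\Rightarrow$~(1), precisely where you flagged the difficulty, and the van der Corput induction you propose does not close it. First, the strategy of subtracting the mean and showing the remainder contributes nothing is false: take $F$ continuous on a minimal nilsystem with $\int_YF\,dm_Y=0$ but $F\not\equiv0$, and $c_n=\overline{F(g^n\Gamma)}$. This $(c_n)$ is itself a nilsequence, hence satisfies (3) by known results, yet $\aveN c_na_n=\aveN|F(g^n\Gamma)|^2\to\int_Y|F|^2\,dm_Y>0$ by unique ergodicity. So the mean-zero part carries a nonzero limit in general. Second, van der Corput only produces an \emph{upper bound} on $\limsup_N|\Phi_N(\Gamma)|^2$ in terms of averaged limits of the differenced expressions; even when each differenced average converges by induction, you get a bound on a limsup, never the Cauchy property for $\Phi_N(\Gamma)$, so convergence to a nonzero limit cannot be reached this way. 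Third, the step-reduction claim is also not correct as stated: $a_n\overline{a_{n+h}}$ naturally lives on $G\times G$ with the product filtration, which still has length $l$; lowering the step requires first decomposing $F$ into vertical characters and passing to the cube group, exactly the machinery the paper deploys in the proof of Theorem~\ref{thm:main}. The standard repair avoids pointwise evaluation altogether: up to uniform approximation, an $l$-step nilsequence is a linear combination of multiple correlation sequences $n\mapsto\int_YF_0(y)\prod_{j=1}^{l}F_j(g^{jn}y)\,dm_Y(y)$; then $\aveN c_na_n$ equals, up to a uniformly small error, $\int_YF_0\cdot\big(\aveN c_n\prod_{j}F_j\circ S^{jn}\big)\,dm_Y$, and hypothesis (3) with $b_j=j$ gives $L^2$, hence weak, convergence of the inner averages, so the integrals converge. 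This conversion of a point evaluation into an integral over $Y$ is the idea your sketch is missing.
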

\begin{proof}
The implication \eqref{gw:nil} $\implies$ \eqref{gw:poly} is \cite[Theorem 1.3]{MR2465660} and the implication \eqref{gw:poly} $\implies$ \eqref{gw:lin} is immediate.
Finally, the implication \eqref{gw:lin} $\implies$ \eqref{gw:nil} follows from \cite[Proposition 2.4]{arxiv:1407.0631}.
\end{proof}

We can now formulate our Wiener--Wintner theorem.
\begin{corollary}
\label{cor:WW}
Suppose that the ergodic measure-preserving system $(X,\mu,T)$ has property $P_{k}$ with parameters $b_{1},\dots,b_{k}$ along the standard F\o{}lner sequence $\Phi_{N}=\{1,\dots,N\}$.
Then for every functions $f_{1},\dots,f_{k}\in L^{\infty}(X,\mu)$ there exists a full measure set $X'\subset X$ such that for every $x\in X'$ the sequence
\[
(\prod_{i=1}^{k} f_{i}(T^{b_{i} n}x))_{n}
\]
satisfies the equivalent properties stated in Proposition \ref{prop:good-weight-L2}.
\end{corollary}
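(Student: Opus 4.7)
My plan is to deduce Corollary~\ref{cor:WW} from Theorem~\ref{thm:main} by a Host--Kra-type structured/random decomposition. This reduces the convergence of the weighted averages to two tasks: convergence on a ``structured'' factor, which will turn the orbit averages into honest polynomial nilsequences to which Leibman's theorem applies, and a uniform smallness bound on the remainder, which Theorem~\ref{thm:main} supplies directly.

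Fix $f_1,\dots,f_k\in L^\infty(X,\mu)$. For each pair $(l,M)\in\N^2$, I use Host--Kra structure theory to decompose $f_i=f_i^{(l,M)}+r_i^{(l,M)}$ where $f_i^{(l,M)}=F_i^{(l,M)}\circ\pi_{l,M}$ is a continuous function on some finite-step nilfactor $\pi_{l,M}\colon X\to G_{l,M}/\Gamma_{l,M}$ of $X$, and $\|r_i^{(l,M)}\|_{U^{k+l}(T,c)}<1/M$. Off a $T^{b_i}$-invariant null set, the sequence $n\mapsto\pi_{l,M}(T^{b_in}x)$ is a translation orbit on $G_{l,M}/\Gamma_{l,M}$, so $\prod_{i=1}^{k}f_i^{(l,M)}(T^{b_in}x)$ is literally a nilsequence in $n$. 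Multiplying by an arbitrary nilsequence $a_n=F(g(n)\Gamma)$ produces a polynomial nilsequence on the product nilmanifold, whose $\aveN$-averages converge by Leibman's theorem.

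For each of the $2^k-1$ cross terms in the expansion of $\prod_i(f_i^{(l,M)}+r_i^{(l,M)})-\prod_i f_i^{(l,M)}$ at least one factor is some $r_j^{(l,M)}$, so Theorem~\ref{thm:main} applies with the minimum $U^{k+l}(T,c)$-norm on the right-hand side bounded by $1/M$. Summing over the $2^k-1$ terms,
\[
\int \limsup_{N\to\infty}\sup_{\substack{G/\Gamma\text{ length }l\\g\in\poly,\,F}} C_{G/\Gamma}^{-1}\|F\|_{W^{r,2^l}(G/\Gamma)}^{-1} \Big|\aveN F(g(n)\Gamma)\Big(\prod_{i=1}^k f_i(T^{b_in}x)-\prod_{i=1}^k f_i^{(l,M)}(T^{b_in}x)\Big)\Big|^{2^{l+1}}\lesssim_{l} M^{-2^{l+1}}.
\]
Chebyshev applied to this bound and a Borel--Cantelli argument over the countable family $(l,M)\in\N^2$ produce a single full-measure set $X'\subset X$ on which, for each fixed $l$, the inner supremum above tends to $0$ as $M\to\infty$.

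For $x\in X'$ and any nilsequence $a_n$ associated to a length-$l$ nilmanifold, the averages $A_N:=\aveN a_n\prod_i f_i(T^{b_in}x)$ form a Cauchy sequence: writing $A_N=A_N^{(l,M)}+E_N^{(l,M)}$ for the structured and error parts, $A_N^{(l,M)}$ converges as $N\to\infty$ for each $M$ by the first step, while $\limsup_N|E_N^{(l,M)}|$ is dominated by the supremum from the second step and therefore vanishes as $M\to\infty$. Hence \eqref{gw:nil} of Proposition~\ref{prop:good-weight-L2} holds on $X'$ for every nilsequence simultaneously, and the proposition upgrades this to \eqref{gw:poly} and \eqref{gw:lin}. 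The main obstacle is to secure the first step cleanly: one must arrange the structured approximation so that $\prod_i f_i^{(l,M)}(T^{b_in}x)$ equals a genuine nilsequence in $n$ for a.e.\ $x$ and \emph{every} $n$, rather than only an $L^2$-approximation. This forces the use of continuous representatives $F_i^{(l,M)}$ on a specific finite-step nilfactor of $\HKZ_{k+l-1}$ and relies on the ergodic invariance of the exceptional null set off which $\pi_{l,M}$ intertwines the actions.
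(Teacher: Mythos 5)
Your argument is correct and is precisely the ``standard way'' that the paper invokes without writing out (it only points to \S 6 of the prequel and the cited seminorm lemma): a structured/uniform decomposition of the $f_i$ relative to the modified seminorms, Leibman's theorem for the continuous-on-a-nilfactor part, and Theorem~\ref{thm:main} together with Chebyshev/Borel--Cantelli over the countable family of parameters for the remainder. No substantive difference from the intended proof.
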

This follows from Theorem~\ref{thm:main} using the characterization \eqref{gw:nil} in Proposition \ref{prop:good-weight-L2} in a standard way (see e.g.\ \cite[\textsection 6]{arxiv:1208.3977} and use \cite[Lemma 2.6]{arxiv:1506.05748}).
By Bourgain's bilinear pointwise ergodic theorem Corollary~\ref{cor:WW} holds unconditionally for $k=2$.
In this case Corollary~\ref{cor:WW} has been previously proved in \cite[Theorem 1.4]{arXiv:1503.08863} and reproved in \cite{arxiv:1504.05732} after the appearance of this note.

\section{Proof of Theorem~\ref{thm:main}}
By induction on $l$.
For $l=0$ the group $G$ is trivial, so the nilsequences are constant and the averages over $n$ converge pointwise almost everywhere by property $P_{k}$.
Hence the left-hand side of \eqref{eq:ave-uniform} equals
\[
\Big\| \lim_{N\to\infty} \aveFn \prod_{i=1}^{k} f_{i}(T^{b_{i}n}x) \Big\|_{L^{2}_{x}}^{2},
\]
the limit now being taken in $L^{2}$.
The uniformity seminorm estimate for this limit originates in \cite[Theorem 12.1]{MR2150389}; the version used here can be found in \cite[Lemma 2.4]{arxiv:1506.05748}.

Suppose now that the claim holds for $l-1$.
Writing the function $F$ as a vertical Fourier series $F=\sum_{\chi}F_{\chi}$ as in \cite[(3.5)]{arxiv:1208.3977} we obtain for the supremum on the left-hand side of \eqref{eq:ave-uniform}
\begin{align*}
& \sup_{\substack{g\in\poly\\ F\in W^{r,2^{l}}(G/\Gamma)}}
\Big| \|F\|_{W^{r,2^{l}}(G/\Gamma)}\inv
\aveFn \prod_{i=1}^{k} f_{i}(T^{b_{i}n}x) F(g(n)\Gamma) \Big|^{2^{l+1}}\\
&\leq
\sup_{\substack{g\in\poly\\ F\in W^{r,2^{l}}(G/\Gamma)}}
\Big| \sum_{\chi} \frac{\|F_{\chi}\|_{W^{r-d_{l},2^{l}}(G/\Gamma)}}{\|F\|_{W^{r,2^{l}}(G/\Gamma)}} \|F_{\chi}\|_{W^{r-d_{l},2^{l}}(G/\Gamma)}\inv
\aveFn \prod_{i=1}^{k} f_{i}(T^{b_{i}n}x) F_{\chi}(g(n)\Gamma) \Big|^{2^{l+1}}\\
&\lesssim_{G/\Gamma}
\sup_{\substack{g\in\poly\\ \mathclap{F\in W^{r-d_{l},2^{l}}(G/\Gamma) \text{ vertical character}}}}
\Big| \|F\|_{W^{r-d_{l},2^{l}}(G/\Gamma)}\inv
\aveFn \prod_{i=1}^{k} f_{i}(T^{b_{i}n}x) F(g(n)\Gamma) \Big|^{2^{l+1}},
\end{align*}
where we have used \cite[Lemma 3.7]{arxiv:1208.3977} in the last line.
We incorporate the $G/\Gamma$-dependent constant into $C_{G/\Gamma}$, which may change from line to line.
Next we apply the van der Corput lemma \cite[Lemma 2.7]{arxiv:1208.3977} and estimate the integrand on the left-hand side of \eqref{eq:ave-uniform} by
\begin{multline*}
\Climinf_{m}
\limsup_{N\to\infty}
\sup_{G/\Gamma} C_{G/\Gamma}^{-1}
\sup_{\substack{g\in\poly\\ \mathclap{F\in W^{r-d_{l},2^{l}}(G/\Gamma) \text{ vertical character}}}}
\Big| \|F\|_{W^{r-d_{l},2^{l}}(G/\Gamma)}^{-2}
\aveFn \prod_{i=1}^{k} f_{i}(T^{b_{i}n}x) F(g(n)\Gamma)\\
\cdot
\prod_{i=1}^{k} \ol{f_{i}(T^{b_{i}(n+m)}x) F(g(n+m)\Gamma)} \Big|^{2^{l}},
\end{multline*}
where $\Climinf_{m} a_{m} := \liminf_{M\to\infty} \big| \frac{2}{M^{2}} \sum_{m=-M}^{M}(M-|m|) a_{m} \big|$.
With the notation for the cube construction from \cite[\textsection~3]{arxiv:1208.3977} this becomes
\[
\Climinf_{m}
\limsup_{N\to\infty}
\sup_{G/\Gamma} C_{G/\Gamma}^{-1}
\sup_{\substack{g\in\poly\\ \mathclap{F\in W^{r-d_{l},2^{l}}(G/\Gamma) \text{ vertical character}}}}
\Big| \|F\|_{W^{r-d_{l},2^{l}}(G/\Gamma)}^{-2}
\aveFn \prod_{i=1}^{k} \big( \ol{f_{i}} T^{b_{i}m}f_{i} \big)(T^{b_{i}n}x) \tilde F_{m}(\tilde g_{m}(n)\tilde \Gamma) \Big|^{2^{l}}.
\]
By \cite[Lemma 3.2]{arxiv:1208.3977} this is bounded by
\[
\Climinf_{m}
\limsup_{N\to\infty}
\sup_{G/\Gamma} C_{\tilde G/\tilde\Gamma}^{-1}
\sup_{\substack{\tilde g_{m}\in\poly[\tilde\Gb]\\ \mathclap{\tilde F_{m}\in W^{r-d_{l},2^{l-1}}(\tilde G/\tilde\Gamma)}}}
\Big| \|\tilde F_{m}\|_{W^{r-d_{l},2^{l-1}}(\tilde G/\tilde\Gamma)}^{-1}
\aveFn \prod_{i=1}^{k} \big( \ol{f_{i}} T^{b_{i}m}f_{i} \big)(T^{b_{i}n}x) \tilde F_{m}(\tilde g_{m}(n)\tilde \Gamma) \Big|^{2^{l}}.
\]
Integrating the last display over $X$ and applying Fatou's lemma and the inductive hypothesis we obtain
\[
\Climinf_{m} \min_{i} \| \ol{f_{i}} T^{b_{i}m}f_{i} \|_{U^{k+l-1}(T,c)}^{2^{l}}.
\]
By Hölder's inequality and the inductive construction of the Gowers--Host--Kra seminorms this is bounded by
\begin{align*}
&\min_{i} \Climinf_{m} \| \ol{f_{i}} T^{b_{i}m}f_{i} \|_{U^{k+l-1}(T,c)}^{2^{l}}\\
&\leq
\min_{i} \big( \Climinf_{m} \| \ol{f_{i}} T^{b_{i}m}f_{i} \|_{U^{k+l-1}(T,c)}^{2^{k+l-1}} \big)^{2^{-k+1}}\\
&\lesssim
\min_{i} \| f_{i} \|_{U^{k+l}(T,c)}^{2^{l+1}}
\end{align*}
as required.

\printbibliography
\end{document}
